\documentclass[12pt]{amsart} % later change to 10pt maybe?

\makeatletter
\newcommand\mathcircled[1]{%
	\mathpalette\@mathcircled{#1}%
}
\newcommand\@mathcircled[2]{%
	\tikz[baseline=(math.base)] \node[draw,ellipse,inner sep=1pt] (math) {$\m@th#1#2$};%
}

\usepackage{multicol}
\usepackage{latexsym}
\usepackage{psfrag}
\usepackage{amsmath}
\usepackage{amssymb}
\usepackage{epsfig}
\usepackage{amsfonts}
\usepackage{amscd}
\usepackage{mathrsfs}
\usepackage{graphicx}
\usepackage{enumerate}
\usepackage[autostyle=false, style=english]{csquotes}
\MakeOuterQuote{"}
\usepackage{ragged2e}
\usepackage[all]{xy}
\usepackage{mathtools}

\usepackage{nicefrac}

\newlength\ubwidth

\usepackage[dvipsnames]{xcolor}

\usepackage{placeins}
\usepackage{tikz}
\usetikzlibrary{shapes, positioning}
\usetikzlibrary{matrix,shapes.geometric,calc,backgrounds}
\usetikzlibrary{arrows.meta} 
\usetikzlibrary{shapes.geometric, arrows, positioning, decorations.pathreplacing} % Added decorations.pathreplacing
% \usetikzlibrary{shapes.geometric}
% \usetikzlibrary{arrows.meta}

\newcommand\bsfrac[2]{%
\scalebox{-1}[1]{\nicefrac{\scalebox{-1}[1]{$#1$}}{\scalebox{-1}[1]{$#2$}}}%
}

\oddsidemargin 0in
\textwidth 6.5in
\evensidemargin 0in
\topmargin -.4in
\textheight 9in

%%%%%%%%%%*******************************************%%%%%%%%%%%%%%%%%%%
%%%%%%%%%%*******************************************%%%%%%%%%%%%%%%%%%%
%%%%%%%%%%*******************************************%%%%%%%%%%%%%%%%%%%

%\pagestyle{plain}

\parindent0pt
\parskip1.6ex

%\widowpenalty=6500
%\clubpenalty=6500

%%%%%%%%%%*******************************************%%%%%%%%%%%%%%%%%%%
%%%%%%%%%%*******************************************%%%%%%%%%%%%%%%%%%%
%%%%%%%%%%*******************************************%%%%%%%%%%%%%%%%%%%

\newtheorem{theorem}{Theorem}%[section]
\newtheorem{lemma}[theorem]{Lemma}

\newtheorem{conj}[theorem]{Conjecture}

\usepackage{amsmath,amssymb,amsthm}
\usepackage[dvipsnames]{xcolor}
\usepackage[colorlinks=true,citecolor=RoyalBlue,linkcolor=red,breaklinks=true]{hyperref}

\usepackage{graphicx}
\usepackage{mathdots} % for anti diagonal dots
\usepackage[margin=2.5cm]{geometry}
\usepackage{ytableau}

\usepackage{cancel}

\usepackage{blkarray}
\usepackage{booktabs}

%%%%%%%%%%%%%%%%%%%%%%%%%%%%%%%%%%%%%%%%%%%%%%%%%%%%%%%%%%%%%%%%%%%%%%%%
\begin{document}%%%%%%%%%%%%%%%%%%%%%%%%%%%%%%%%%%%%%%%%%%%%%%%%%%%%%%%%
	%%%%%%%%%%%%%%%%%%%%%%%%%%%%%%%%%%%%%%%%%%%%%%%%%%%%%%%%%%%%%%%%%%%%%%%%

\title[generating $k$-regular partitions]
{An Alternative Generating Function for $k$-Regular Partitions}

\author[Kur\c{s}ung\"{o}z]{Ka\u{g}an Kur\c{s}ung\"{o}z}
\address{Ka\u{g}an Kur\c{s}ung\"{o}z, Faculty of Engineering and Natural Sciences, 
    Sabanc{\i} University, Tuzla, Istanbul 34956, Turkey}
\email{kursungoz@sabanciuniv.edu}

\subjclass[2010]{05A17, 05A15, 11P84}

\keywords{integer partition, partition generating fuction, 2-regular partitions}
      
\date{2025}

\begin{abstract}
 We construct a $k$-fold $q$-series as a generating function of 
 $k$-regular partitions for each positive integer $k$.  
 The $k=1$ case is one of Euler's $q$-series identities 
 pertaining to the partitions into distinct parts.  
 The construction is combinatorial.  
 Although we find a connection to Bessel polynomials in the $k=2$ case, 
 this note is certainly not a study of Bessel polynomials and their $q$-analogs.  
\end{abstract}

\maketitle

An integer partition of a non-negative integer $n$ 
is a non-decreasing sequence of positive integers 
whose sum is $n$~\cite{theBlueBook}.  
For example, 4 has the following five partitions.  
\begin{align}
\nonumber 
  4, \quad 
  2+2, \quad 
  1+3, \quad  
  1+1+2, \quad 
  1+1+1+1.  
\end{align}
Among these, $4$ and $1+3$ are partitions into distinct parts.  
If we denote the number of partitions of $n$ satisfying condition $A$ 
by $p(n \vert A)$, 
then 
\begin{align}
\nonumber 
  \sum_{n \geq 0} p(n \vert A) q^n
\end{align}
is called a partition generating function.  
It is well known~\cite{theBlueBook} that 
\begin{align}
\nonumber 
  \sum_{n \geq 0} p(n) q^n = \frac{1}{(q; q)_\infty}, 
  \quad \textrm{ and } \quad 
  \sum_{n \geq 0} p(n \vert \textrm{ distinct parts }) q^n = (-q; q)_\infty, 
\end{align}
where 
\begin{align}
\nonumber 
  (a; q)_n = \prod_{j = 1}^n \left( 1 - a q^{j-1} \right), 
  \quad \textrm{ and } \quad 
  (a; q)_\infty = \lim_{n \to \infty} (a; q)_n.  
\end{align}
The infinite products (in fact, any series or infinite product in this paper) 
converge absolutely when $ \vert q \vert < 1$~\cite{theBlueBook}.  

It is often necessary to work with a second parameter 
and consider the number of partitions of $n$ into $m$ parts 
satisfying condition $A$, denoted by $p(m, n \vert A)$.  
Again, it is well known~\cite{theBlueBook} that 
\begin{align}
\nonumber 
  \sum_{m, n \geq 0} p(m, n) x^m q^n = \frac{1}{(xq; q)_\infty}, 
  \quad \textrm{ and } \quad 
  \sum_{m, n \geq 0} p(m, n \vert \textrm{ distinct parts }) x^m q^n = (-xq; q)_\infty.  
\end{align}

$k$-regular partitions 
are those in which no part is repeated more than $k$ times.  
1-regular partitions are partitions into distinct parts.  
The seven 2-regular partitions of 6 are 
\begin{align}
\nonumber 
  6, \quad 
  1+5, \quad 
  2+4, \quad 
  1+1+4, \quad 
  3+3, \quad 
  1+2+3, \quad 
  1+1+2+2.  
\end{align}
Depending on the purpose, 
it is possible to represent partitions in different ways.  
For instance, we can drop the plus signs for brevity, 
or stack the repeated parts on top of each other 
for emphasis on $k$-regularity.  
So, the last displayed partition may be written as 
$1 \; 1 \; 2 \; 2$, or 
$\begin{array}{cc} 1 & 2 \\ 1 & 2\end{array}$.  

$k$-regular partitions are widely studied 
either for their combinatorial properties (e.g.~\cite{BallMercRegPtn}), 
or their arithmetic properties ( e.g.~\cite{BallMercRegPtn, BeckBesskRegPtn}).  
Our main goal is to prove the following theorem (Theorem \ref{thm2RegMain})
for 2-regular partitions, and then to generalize it to 
$k-$regular partitions for any positive integer $k$ (Theorem \ref{thmkReg}).  
We will also note a connection between the polynomials $b(m, n)$ 
in Theorem \ref{thm2RegMain} and the coefficient of Bessel polynomials~\cite{GrossBess, BesselSurvey}.  
However, our aim is not studying the Bessel polynomials here.  

\begin{theorem}
\label{thm2RegMain}
 \begin{align}
 \nonumber 
  \frac{ ( x^3 q^3; q^3 )_\infty }{ ( xq; q )_\infty } 
  = \sum_{ m, n \geq 0 } \frac{ q^{ \binom{m+n+1}{2} 
      + \binom{m+1}{2} } (1 - q)^m x^{2m+n} }{ (q; q)_{2m+n} }
    b(m, n), 
 \end{align}
 where $b(0,0) = 1$, $b(m, n) = 0$ if $m<0$ or $n<0$, and 
 \begin{align}
 \nonumber 
  b(m, n) = \left( 1 + q + q^2 + \cdots + q^{2m+n-2} \right) b(m-1, n)
    + q^m b(m, n-1).  
 \end{align}
\end{theorem}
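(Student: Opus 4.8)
The plan is to reinterpret the left-hand side combinatorially and then prove a refined, coefficient-by-coefficient version of the identity. First I would use the elementary factorization $1 + t + t^2 = (1-t^3)/(1-t)$ with $t = xq^i$ to write
\[
\frac{(x^3q^3;q^3)_\infty}{(xq;q)_\infty} = \prod_{i\ge 1}\frac{1 - x^3q^{3i}}{1-xq^i} = \prod_{i\ge 1}\left(1 + xq^i + x^2q^{2i}\right).
\]
Expanding the product, choosing $x^2q^{2i}$ records that the value $i$ occurs twice and choosing $xq^i$ records that it occurs once, so the product is exactly the generating function for $2$-regular partitions. Refining by the number $m$ of values used twice and the number $n$ of values used once, and letting $g(m,n)$ denote the $q$-generating function for the set $S(m,n)$ of such partitions (each value $\ge 1$), I obtain
\[
\frac{(x^3q^3;q^3)_\infty}{(xq;q)_\infty} = \sum_{m,n\ge 0} g(m,n)\, x^{2m+n},
\]
since a partition in $S(m,n)$ carries the monomial $x^{2m}x^{n} = x^{2m+n}$. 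It therefore suffices to prove $g(m,n) = f(m,n)$, where $f(m,n)$ denotes the full summand $\dfrac{q^{\binom{m+n+1}{2}+\binom{m+1}{2}}(1-q)^m}{(q;q)_{2m+n}}\,b(m,n)$ on the right-hand side.

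Next I would simplify the recursion satisfied by $f$. Writing $1+q+\cdots+q^{2m+n-2} = (1-q^{2m+n-1})/(1-q)$ and substituting the recursion for $b(m,n)$ into the definition of $f(m,n)$, the Pochhammer factors $(q;q)_{2m+n-2}$ and $(q;q)_{2m+n-1}$, the powers of $(1-q)$, and the prefactors $q^{\binom{\cdot}{2}}$ all telescope; a direct computation (using $\binom{m+n+1}{2}-\binom{m+n}{2}=m+n$ and $\binom{m+1}{2}-\binom{m}{2}=m$) collapses everything to the clean recursion
\[
f(m,n) = \frac{q^{2m+n}}{1-q^{2m+n}}\bigl(f(m-1,n) + f(m,n-1)\bigr),
\]
with $f(0,0)=1$ and $f(m,n)=0$ when $m<0$ or $n<0$. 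This step is routine bookkeeping rather than a genuine difficulty.

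The heart of the argument is to show that $g$ obeys the very same recursion, which I would do bijectively. Given $\lambda\in S(m,n)$ with $m+n\ge 1$, subtract $1$ from each of its $M:=2m+n$ parts; this lowers the weight by exactly $M$ and preserves the multiplicity structure. If the smallest distinct value $v$ of $\lambda$ satisfies $v\ge 2$, the result again lies in $S(m,n)$; if $v=1$ is a single part, deleting the resulting $0$ lands in $S(m,n-1)$; if $v=1$ is a doubled part, deleting the two resulting $0$'s lands in $S(m-1,n)$. Reading these maps backwards — add $1$ to every part, then (in the last two cases) append one, respectively two, copies of the part $1$ — gives bijections onto the three classes, each raising the weight by $M$. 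Summing the three contributions yields
\[
g(m,n) = q^{M}g(m,n) + q^{M}g(m,n-1) + q^{M}g(m-1,n),
\]
which rearranges to the recursion above. I expect the main obstacle to be exactly this combinatorial step: one must check that the part counts transform correctly ($M\to M,\ M-1,\ M-2$), that the appended $1$'s genuinely become the new smallest value and carry the correct single/doubled label, and that the three cases partition $S(m,n)$ with no overlap, so that the inverse maps are honest bijections.

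Finally, since $g$ and $f$ satisfy the same recursion and agree at $(0,0)$ (and both vanish off the first quadrant), induction on $m+n$ gives $g(m,n)=f(m,n)$ for all $m,n\ge 0$. Substituting back into the product expansion yields
\[
\frac{(x^3q^3;q^3)_\infty}{(xq;q)_\infty} = \sum_{m,n\ge 0} f(m,n)\,x^{2m+n},
\]
which is precisely the claimed identity.
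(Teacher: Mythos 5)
Your proposal is correct, and it takes a genuinely different route from the paper. The paper proceeds through an intermediate lemma: an explicit weight-minimizing bijection (moving parts backward to a base partition $1,2,\ldots,m+n$ with repeats at positions $i_1<\cdots<i_m$, recording the moves in an auxiliary partition with forbidden part sizes), which produces an evidently positive multiple sum over $m$, $n$ and $1\le i_1<\cdots<i_m\le m+n$; the theorem then follows by splitting that inner sum according to $i_1=1$ versus $i_1>1$ to get the recurrence $a(m,n)=q^m(1-q^{2m+n-1})a(m-1,n)+q^m a(m,n-1)$ and substituting $a(m,n)=q^{\binom{m+1}{2}}(1-q)^m b(m,n)$. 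You bypass the lemma entirely: after the same product factorization and the same refinement by $m$ pairs and $n$ singletons, you characterize both sides by the single clean recursion
\begin{align}
\nonumber
f(m,n)=\frac{q^{2m+n}}{1-q^{2m+n}}\bigl(f(m-1,n)+f(m,n-1)\bigr),
\end{align}
verified on the analytic side by routine telescoping of the Pochhammer and $(1-q)$ factors (your computation checks out, including the edge cases where the first term vanishes), and on the combinatorial side by the classical peel-a-layer bijection (subtract $1$ from every part, then delete the resulting zero or pair of zeros according to whether the smallest value is a singleton, a pair, or absent), whose three cases are indeed disjoint and exhaustive for $m+n\ge 1$ since $2$-regularity caps the multiplicity of $1$ at two. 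Induction on $m+n$ then finishes, noting only that $1-q^{2m+n}$ is invertible as a formal power series and that the recursion is invoked only for $m+n\ge1$. What each approach buys: yours is shorter and more elementary, and the symmetric form of the recursion makes the $k$-regular generalization (Theorem \ref{thmkReg}) essentially transparent; the paper's detour through the lemma earns an evidently positive series of independent interest and an explicit bijection that preserves the relative placement of pairs and singletons, structural information your recursive argument does not expose.
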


It is well known~\cite{theBlueBook} that 
\begin{align}
\label{genfuncDist}
 \sum_{n \geq 0} \frac{ q^{ \binom{n+1}{2} } }{ (q; q)_n }
\end{align}
generates partitions into distinct parts.  
For any non-negative integer $n$, 
we can think of $q^{ \binom{n+1}{2} }$ as generating 
a base partition into exactly $n$ parts with the smallest weight, 
that is $1 + 2 + \cdots + n$.  
Then, for the same $n$, $\frac{ 1 }{ (q; q)_n }$ 
generates partitions into $n$ parts, zeros allowed.  
Take $0 \leq \lambda_1 \leq \lambda_2 \leq \cdots \leq \lambda_n$ as a generic one.  
Finally, their combination given by 
\begin{align}
\nonumber 
 (1 + \lambda_1) + (2 + \lambda_2) + \cdots + (n + \lambda_n)
\end{align}
generates a partition into exactly $n$ distinct parts.  
Conversely, given a partition into $n$ distinct parts 
$\mu_1 + \mu_2 + \cdots + \mu_n$ where $0 < \mu_1 < \mu_2 < \cdots < \mu_n$, 
we have 
\begin{align}
\nonumber 
 0 \leq \mu_1 - 1 \leq \mu_2 - 2 \leq \cdots \leq \mu_n - n.  
\end{align}
Setting $\lambda_j = \mu_j - j$ for $j = 1, 2, \ldots, n$ 
will give us a partition $0 \leq \lambda_1 \leq \lambda_2 \leq \cdots \leq \lambda_n$ 
into $n$ parts, zeros allowed, 
along with the base partition into $n$ parts with minimal weight $1 + 2 + \cdots + n$.  

We can also imagine parts of the base partition moving forward of backward.  
For example, part $j$ in the base partition moves forward $\lambda_j$ times 
for $j = n, (n-1), \ldots, 1$, in this order.  
{\allowdisplaybreaks
\begin{align}
\nonumber 
 & 1 \quad 2 \quad \cdots \quad (n-1) \quad \mathbf{n} \qquad 
 (\textrm{ the highlighted part moves forward } \lambda_n \textrm{ times }) \\ 
\nonumber 
 & \rightsquigarrow 1 \quad 2 \quad \cdots \quad \mathbf{(n-1)} \quad (n + \lambda_n) \qquad 
 (\textrm{ the highlighted part moves forward } \lambda_{n-1} \textrm{ times }) \\ 
\nonumber 
 & \qquad \qquad \vdots \\ 
\nonumber 
 & (1 + \lambda_1) \quad (2 + \lambda_2) \quad \cdots \quad (n + \lambda_n) \qquad
 = \quad \mu_1 \quad \mu_2 \quad \cdots \quad \mu_n
\end{align}}
For any non-negative integer $n$, 
this gives a bijection between partitions into $n$ distinct parts 
($0 < \mu_1 < \mu_2 < \cdots < \mu_n$) 
and pairs consisting of the base partition $1 + 2 + \cdots + n$ 
and a partition into $n$ parts 
$0 \leq \lambda_1 \leq \lambda_2 \leq \cdots \leq \lambda_n$, zeros allowed.  
This is the most direct combinatorial explanation of the fact that 
\eqref{genfuncDist} generates partitions into distinct parts.  
We can also consider 
\begin{align}
\nonumber 
 \sum_{n \geq 0} \frac{ q^{ \binom{n+1}{2} } x^n }{ (q; q)_n }.  
\end{align}
The same bijection works.  
The exponent of $x$ keeps track of the number of parts, 
while the exponent of $q$ is the number being partitioned.  

For our purposes, we will let $\frac{ 1 }{ (q; q)_n }$ generate 
the conjugate of $\lambda_1 + \lambda_2 + \cdots + \lambda_n$.  
This is a partition into parts which are at most $n$, 
with $f_1$ 1's, $f_2$ 2's, \ldots, $f_n$ $n$'s.  
In this case, each $j$ moves the $j$ largest parts in the intermediate partition 
forward, one each, $f_j$ times.  
This is done for each $j = 1, 2, \ldots, n$.  
By the intermediate partition, 
we mean any partition encountered in constructing any partition 
$\mu_1 + \mu_2 + \cdots + \mu_n$ into distinct parts 
from the base partition $1 + 2 + \cdots + n$ with these steps.  

We want to imitate the above bijection for 2-regular partitions, 
that is the partitions into parts that repeat at most twice.  
This was attempted in~\cite{KOevidPos, HalimeThesis} also, 
while studying 2-regular partitions that arose naturally 
in a class of partitions due to Kanade and Russell~\cite{KRstair}.  
We will discuss the differences between approaches after the construction.  

\begin{lemma}
\label{lemmaForMainThm}
 \begin{align}
 \nonumber 
  & \frac{ (x^3 q^3; q^3)_\infty }{ ( xq; q)_\infty }
  = \prod_{ n \geq 1 } \left( 1 + x q^n + x^2 q^{2n} \right) \\ 
 \nonumber 
  = & \sum_{ \substack{ m, n \geq 0 \\ 1 \leq i_1 < i_2 < \cdots < i_m \leq m+n} }
      \frac{ q^{ \binom{m+n+1}{2} + i_1 + i_2 + \cdots + i_m } 
         x^{ 2m+n } }{ (q; q)_{2m+n} } \cdots \\
 \nonumber 
  & \qquad \qquad \times ( 1 - q^{ 2m+n + 1 - i_1 - 1 }) 
  ( 1 - q^{ 2m+n + 1 - i_2 - 2 }) \cdots ( 1 - q^{ 2m+n + 1 - i_m - m })
 \end{align}
\end{lemma}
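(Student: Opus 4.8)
The plan is to prove the two equalities in turn. The first, $\frac{(x^3q^3;q^3)_\infty}{(xq;q)_\infty}=\prod_{n\geq1}(1+xq^n+x^2q^{2n})$, is purely formal: applying the factorization $1+y+y^2=\frac{1-y^3}{1-y}$ with $y=xq^n$ to each factor on the right gives $\prod_{n\geq1}\frac{1-x^3q^{3n}}{1-xq^n}$, whose numerator and denominator reassemble into $(x^3q^3;q^3)_\infty$ and $(xq;q)_\infty$. This product is manifestly the generating function for $2$-regular partitions: the factor $(1+xq^n+x^2q^{2n})$ records the choice of using the part $n$ zero, one, or two times, with $x$ marking the number of parts and $q$ the number being partitioned. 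It therefore remains to expand this generating function into the right-hand side of the lemma.

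For the second equality I would sort $2$-regular partitions by type. Call $\pi$ a partition of type $(m,n,\vec\imath)$ if it has exactly $m$ values occurring twice and $n$ values occurring once, and if, on listing its distinct values $v_1<v_2<\cdots<v_{m+n}$, the doubled values occupy the ranks $i_1<i_2<\cdots<i_m$ (so $1\leq i_1<\cdots<i_m\leq m+n$). A partition of this type is determined by the strictly increasing sequence $v_1<\cdots<v_{m+n}$ together with the fixed doubling pattern, and its number of parts counted with multiplicity is $2m+n$, matching the exponent of $x$. Writing $v_r=r+\lambda_r$ turns the strict inequalities into $0\leq\lambda_1\leq\lambda_2\leq\cdots\leq\lambda_{m+n}$, and the weight splits as
\begin{align*}
|\pi|=\sum_{r=1}^{m+n}c_r v_r=\Big(\sum_{r=1}^{m+n}r+\sum_{\ell=1}^m i_\ell\Big)+\sum_{r=1}^{m+n}c_r\lambda_r,
\end{align*}
where $c_r=2$ if $r\in\{i_1,\dots,i_m\}$ and $c_r=1$ otherwise. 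The first bracket is the base exponent $\binom{m+n+1}{2}+i_1+\cdots+i_m$, so for each fixed type the contribution is $x^{2m+n}q^{\binom{m+n+1}{2}+i_1+\cdots+i_m}$ times $\sum_{\lambda}q^{\sum_r c_r\lambda_r}$.

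The heart of the matter is to evaluate this last sum. I would set $d_s=\lambda_s-\lambda_{s-1}\geq0$ (with $\lambda_0=0$), so that $\lambda_r=d_1+\cdots+d_r$ and the $d_s$ range freely over the non-negative integers. Interchanging summations gives $\sum_r c_r\lambda_r=\sum_{s=1}^{m+n}C_s\,d_s$ with $C_s=\sum_{r\geq s}c_r=(m+n+1-s)+\lvert\{\ell:i_\ell\geq s\}\rvert$, hence
\begin{align*}
\sum_{0\leq\lambda_1\leq\cdots\leq\lambda_{m+n}}q^{\sum_r c_r\lambda_r}=\prod_{s=1}^{m+n}\frac{1}{1-q^{C_s}}.
\end{align*}
To match the lemma it then suffices to prove the purely arithmetic identity that, as sets, $\{C_1,\dots,C_{m+n}\}=\{1,2,\dots,2m+n\}\setminus\{a_1,\dots,a_m\}$, where $a_\ell=2m+n+1-i_\ell-\ell$; this cancels the $m$ factors $(1-q^{a_\ell})$ against the matching factors of $(q;q)_{2m+n}$ and leaves exactly $\prod_s(1-q^{C_s})^{-1}$. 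This set identity is the main obstacle, and I expect to settle it by a jump argument. The values $C_s$ are strictly decreasing in $s$, since consecutive values differ by $1+[\,s\in\{i_1,\dots,i_m\}\,]$; thus they are distinct, with $C_1=2m+n$ at the top and (using the convention $C_{m+n+1}=0$) a step of size $2$ occurring precisely as $s$ passes each $i_\ell$. The value skipped by that step is $C_{i_\ell+1}+1=2m+n+1-i_\ell-\ell=a_\ell$. Since the $a_\ell$ are $m$ distinct numbers (as $i_\ell+\ell$ is strictly increasing) and $\{C_s\}$ has $m+n$ elements, the two disjoint sets together partition $\{1,\dots,2m+n\}$, which is the claim.

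Finally I would sum over all types: letting $(m,n,\vec\imath)$ range over $m,n\geq0$ and $1\leq i_1<\cdots<i_m\leq m+n$ reproduces exactly the right-hand side of the lemma, while on the left each $2$-regular partition is counted once, through its unique type and its unique $\lambda$-sequence. This realizes, at the level of generating functions, the \emph{moving parts} picture of the introduction: the factor $\frac{1}{(q;q)_{2m+n}}$ records moves of the $2m+n$ parts, and the correction factors $\prod_\ell(1-q^{a_\ell})$ delete precisely the moves of the forbidden sizes $a_\ell$ that would destroy $2$-regularity or the prescribed doubling pattern; the substitution above is simply the cleanest route to verifying that these are the right sizes.
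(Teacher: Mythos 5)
Your proof is correct, and it arrives at the paper's summand by a genuinely different mechanism, even though the outer structure --- classifying $2$-regular partitions by the type data $(m,n,\,i_1<\cdots<i_m)$ and isolating the base weight $\binom{m+n+1}{2}+i_1+\cdots+i_m$ --- is the same. The paper proceeds bijectively and algorithmically: a backward procedure repeatedly subtracts $\mu_k-k$ from all parts $\geq\mu_k$, recording each move in an auxiliary partition $\lambda$ with parts at most $2m+n$; the part-count observation (the partition has $2m+n-i_j-j$ parts exceeding $\mu_{i_j}$ and $2m+n+2-i_j-j$ parts at least $\mu_{i_j}$) shows $\lambda$ never contains a part equal to $a_j=2m+n+1-i_j-j$, the factor $\prod_j(1-q^{a_j})/(q;q)_{2m+n}$ is read off as the generating function of partitions avoiding those part sizes, and a forward procedure (with a short argument that its steps are unambiguous) supplies the inverse. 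You instead make bijectivity automatic through the change of variables $v_r=r+\lambda_r$, $d_s=\lambda_s-\lambda_{s-1}$, sum freely over $d_s\geq 0$ to get $\prod_s(1-q^{C_s})^{-1}$, and reduce everything to the set identity $\{C_1,\ldots,C_{m+n}\}=\{1,\ldots,2m+n\}\setminus\{a_1,\ldots,a_m\}$, settled by your jump argument. The two proofs are dual descriptions of one bijection: your $C_s$ is exactly the paper's count of parts $\geq\mu_s$ at step $s$, and your $d_s$ is the multiplicity of the part $C_s$ in the paper's auxiliary partition, so your set identity is the complement form of the paper's forbidden-part-size statement. What your route buys is economy --- no two-phase algorithm, no well-definedness check for the inverse, and the combinatorial content isolated in one clean arithmetic lemma. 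What the paper's route buys is the explicit moving-parts machinery, which is what its worked example, its comparison with the earlier constructions it cites, and the omitted proof of the $k$-regular generalization are all phrased in terms of.
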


\begin{proof}
  It is clear that the infinite products in the lemma 
  generate 2-regular partitions.  
  The exponent of $q$ keeps track of the weight, 
  and the exponent of $x$ keeps track of the length, 
  that is the number of parts.  
  
  Given a 2-regular partition, 
  we will move parts backward so as to minimize the weight, 
  and record the number of moves in an auxiliary partition
  (c.f.~\cite{KParity, K_AG}).  
  However, we will keep the repeated parts (hereafter \emph{pairs}) 
  as well as the relative positions of pairs and non-repeated parts (hereafter \emph{singletons}).  
  Let $\mu_1$, $\mu_2$, \ldots, $\mu_{m+n}$ be the part sizes of an 
  arbitrary but fixed 2-regular partition having $m$ pairs and $n$ singletons 
  such that 
  \begin{align}
  \nonumber 
  0 < \mu_1 < \mu_2 < \cdots < \mu_{m+n}.  
  \end{align}
  This 2-regular partition has $2m+n$ parts, 
  because exactly $m$ of the $\mu_j$'s repeat by assumption.  
  Call them $\mu_{i_1}$, $\mu_{i_2}$, \ldots $\mu_{i_m}$, 
  where 
  \begin{align}
  \nonumber 
  1 \leq i_1 < i_2 < \cdots < i_m \leq m+n.  
  \end{align}
  Observe that the 2-regular partition at hand 
  has $2m + n - i_j - j$ parts which are $> \mu_{i_j}$ 
  and $2m + n + 2 - i_j - j$ parts which are $\geq \mu_{i_j}$ 
  for each $j =$ 1, 2, \ldots, $m$.  
  \begin{align}
  \label{ptn2RegPartCount}
  (\textrm{ parts } < \mu_{i_j}) \qquad 
  \begin{array}{c} \mu_{i_j} \\ \mu_{i_j} \end{array} \qquad 
  \underbrace{ 
    \overbrace{ (\textrm{ parts } > \mu_{i_j}) }^{ (m-j) \textrm{ of them repeat } }
  }_{ (m+n-i_j) \textrm{ part sizes } }
  \end{align}
  Unless the 2-regular partition is the empty partition (generated by $q^0 = $ 1), 
  we run the following procedure.  
  \begin{itemize}
  \item[{\bf 1b - }] Set $k = 1$, 
    the auxiliary partition $\lambda = $ the empty partition $\varepsilon$, 
    and $r = $ the number of parts that are $\geq \mu_k$.  
  \item[{\bf 2b - }] Subtract $(\mu_k - k)$ from all parts that are $\geq \mu_k$, 
    and append $(\mu_k - k)$ $r$'s to the auxiliary partition $\lambda$.  
    (Part sizes $\mu_j$'s are dynamically updated here, 
    $\mu$ is the intermediate partition.)  
  \item[{\bf 3b - }] Increment $k$ by 1.  If $k > m+n$, then stop.  
  \end{itemize}
  Now we have the base partition with part sizes 1, 2, \ldots $(m+n)$, 
  in which $i_1$, $i_2$, \ldots, $i_m$ repeat, namely
  \begin{align}
  \nonumber 
  \begin{array}{cccc cccc ccc}
      &   &        & i_1 &           &        &           & i_2 &           &        & \\ 
    1 & 2 & \cdots & i_1 & (i_1 + 1) & \cdots & (i_2 - 1) & i_2 & (i_2 + 1) & \cdots & (m+n)
  \end{array}, 
  \end{align}
  and an auxiliary partition $\lambda$ into parts that are at most $(2m+n)$.  
  By the observation just above the procedure, $\lambda$ cannot contain 
  parts that are equal to 
  \begin{align}
  \nonumber 
  (2m+n + 1 - i_1 - 1), \quad 
  (2m+n + 1 - i_2 - 2), \quad \cdots \quad 
  (2m+n + 1 - i_m - m).  
  \end{align}
  The base partition has weight $\binom{m+n+1}{2} + i_1 + i_2 + \cdots + i_m$.  
  Therefore, the base partition and the auxiliary partition pairs are generated by 
  \begin{align}
  \label{termGenPairs} 
  \frac{ q^{ \binom{m+n+1}{2} + i_1 + i_2 + \cdots + i_m } 
    ( 1 - q^{ 2m+n + 1 - i_1 - 1 }) ( 1 - q^{ 2m+n + 1 - i_2 - 2 }) 
    \cdots ( 1 - q^{ 2m+n + 1 - i_m - m })}{ (q; q)_{2m+n} }.  
  \end{align}
  The procedure 1b-3b also preserves the relative placement of the singletons and the pairs.  

  Conversely, suppose that we are given the base partition with part sizes 1, 2, \ldots, $(m+n)$ 
  in which $i_1$, $i_2$, \ldots, $i_m$ occur twice ($1 \leq i_1 < i_2 < \cdots < i_m \leq m+n$), 
  and all others occur once; 
  and a partition $\lambda$ into parts that are at most $(2m+n)$ 
  having no parts equal to $(2m+n+1 - i_1 - 1)$, $(2m+n+1 - i_2 - 2)$, \ldots $(2m+n+1 - i_m - m)$.  
  First; similar to \eqref{ptn2RegPartCount}, 
  we can argue that the base partition has $2m+n-i_j-j$ parts 
  which are strictly greater than $i_j$, 
  and $2m+n+2-i_j-j$ parts which are at least $i_j$
  for $j = $ 1, 2, \ldots, $m$.  
  \begin{align}
  \label{basePtn2RegPartCount}
  \begin{array}{cccc} & & & i_j \\ 1 & 2 & \cdots & i_j \end{array} 
  \underbrace{\overbrace{\begin{array}{ccccccc} & & & i_{j+1} & & & \\ 
    (i_j + 1) & \cdots & (i_{j+1}-1) & i_{j+1} & (i_{j+1}-1) & \cdots & (m+n) \end{array}}^{
    (m-j) \textrm{ of them repeat } }}_{ (m+n-i_j) \textrm{ part sizes } }
  \end{align}
  We run the following procedure.  
  \begin{itemize}
  \item[{\bf 1f - }] Set $r = 1$ and the intermediate partition $\mu = $ 
    the base partition in \eqref{basePtn2RegPartCount}.  
  \item[{\bf 2f - }] Let $f_r$ be the number of parts in $\lambda$ 
    that are equal to $r$.  
    Increment the largest $r$ parts in $\mu$ $f_r$ times.  
  \item[{\bf 3f - }] Increment $r$ by 1.  
    If $r > 2m+n$, then stop.  
  \end{itemize}
  Now we have a 2-regular partition $\mu$ 
  which has $(m+n)$ part sizes $m$ of which repeat.  
  By the observation just above \eqref{basePtn2RegPartCount}, 
  the base partition and the constructed $\mu$ 
  have the same relative ordering of singletons and pairs.  
  Again, by the same observation, 
  there is no ambiguity in applying step 2f.  
  This is because some part sizes in $\lambda$ are restricted, 
  so that the largest $r$ parts never contain exactly one copy of a repeated part.  
  The base partition has weight $\binom{m+n+1}{2} + i_1 + \cdots + i_m$, 
  so that the base partition and auxiliary partition pairs 
  are generated by \eqref{termGenPairs}.  
\end{proof}

Let us give an example to the backward phase of the proof of Lemma \ref{lemmaForMainThm}.  
Consider the following 2-regular partition.  
\begin{align}
\nonumber 
  \begin{array}{ccccc} & & 10 & & 19 \\ 3 & 6 & 10 & 15 & 19 \end{array}
\end{align}
The part sizes are 
\begin{align}
\nonumber 
  \mu_1 = 3 \quad < \quad 
  \mu_2 = 6 \quad < \quad 
  \mu_3 = 10 \quad < \quad 
  \mu_4 = 15 \quad < \quad 
  \mu_5 = 19.  
\end{align}
We set $k = 1$, $\lambda = \varepsilon$, the empty partition.  
There are 7 parts greater than or equal to $\mu_1$.  
We subtract $\mu_1 - 1 = 2$ from all of them, 
and append 2 7's to $\lambda$.  
Now, the 2-regular partition becomes 
\begin{align}
\nonumber 
  \begin{array}{ccccc} & & 8 & & 17 \\ 1 & 4 & 8 & 13 & 17 \end{array}, 
\end{align}
and $\lambda = 7 \; 7$.  
We increment $k$, it becomes $k = 2$.  
There are 6 parts $\geq \mu_2 = 4$.  
We subtract $\mu_2 - 2 = 2$ from all of them, 
and append 2 6's to $\lambda$.  
The 2-regular partition becomes 
\begin{align}
\nonumber 
  \begin{array}{ccccc} & & 6 & & 15 \\ 1 & 2 & 6 & 11 & 15 \end{array}, 
\end{align}
and $\lambda = 6 \; 6 \; 7 \; 7$.  
The same operation for $k = 3$ makes the intermediate 2-regular partition 
\begin{align}
\nonumber 
  \begin{array}{ccccc} & & 3 & & 12 \\ 1 & 2 & 3 & 8 & 12 \end{array}, 
\end{align}
and $\lambda = 5 \; 5 \; 5 \; 6 \; 6 \; 7 \; 7$.  
Because $\mu_3$ is a repeated part, $\lambda$ will skip a part.  
It will not have any 4's.  
In the end, the 2-regular partition will be the base partition 
\begin{align}
\nonumber 
  \begin{array}{ccccc} & & 3 & & 5 \\ 1 & 2 & 3 & 4 & 5 \end{array}, 
\end{align}
and $\lambda = 2 \; 2 \; 2 \; 3 \; 3 \; 3 \; 3 \; 5 \; 5 \; 5 \; 6 \; 6 \; 7 \; 7$.  
$\lambda$ will not have any 1's, either.  
We invite the reader to run the forward phase of the algorithm 
in the proof of Lemma \ref{lemmaForMainThm} 
on the last displayed 2-regular base partition and $\lambda$.  

The combinatorial approach in Lemma \ref{lemmaForMainThm} is more straightforward 
than in~\cite{KOevidPos, HalimeThesis}.  
In contrast, the index range in the multiple sum in Lemma \ref{lemmaForMainThm} 
is more intricate than the multiple sum 
that is constructed in~\cite{KOevidPos, HalimeThesis}, 
due to more relaxed conditions on the base partition.  
Another difference is that the construction in~\cite{KOevidPos, HalimeThesis} 
works for 2-regular partitions only.  

The series in Lemma \ref{lemmaForMainThm} has evidently positive coefficients, too.  
After the proof of Lemma \ref{lemmaForMainThm}, 
the proof of Theorem \ref{thm2RegMain} becomes 
a straightforward $q$-series manipulation.  

%%%%%%%%%%%%%%%%%%%%%%%%%%%%%%%%%%%%%%%%%%%%%%%%%%%%%%%%%%%%%
%% begin proof of Theorem \ref{thm2RegMain}
%%%%%%%%%%%%%%%%%%%%%%%%%%%%%%%%%%%%%%%%%%%%%%%%%%%%%%%%%%%%%
\begin{proof}[proof of Theorem \ref{thm2RegMain}]
  Rewrite the series in Lemma \ref{lemmaForMainThm} as
  \begin{align}
  \nonumber 
   \sum_{m, n \geq 0} \frac{ q^{ \binom{m+n+1}{2} } x^{2m+n} }{ (q; q)_{2m+n} } 
   \sum_{ 1 \leq i_1 < \cdots < i_m \leq m+n } q^{ i_1 + \cdots + i_m } 
    \left( 1 - q^{ 2m+n+1-i_1-1 } \right) \cdots \left( 1 - q^{ 2m+n+1-i_m-m } \right), 
  \end{align}
  and call the inner sum $a(m, n)$.  
  We can calculate 
  {\allowdisplaybreaks \begin{align}
  \nonumber 
   a(m, n) & = q ( 1 - q^{2m+n-1} )
   \sum_{ 1 < i_2 < \cdots < i_m \leq m+n } q^{ i_2 + \cdots + i_m } 
    \left( 1 - q^{ 2m+n+1-i_2-2 } \right) \cdots \left( 1 - q^{ 2m+n+1-i_m-m } \right) \\ 
  \nonumber 
   & + \sum_{ 1 < i_1 < \cdots < i_m \leq m+n } q^{ i_1 + \cdots + i_m } 
    \left( 1 - q^{ 2m+n+1-i_1-1 } \right) \cdots \left( 1 - q^{ 2m+n+1-i_m-m } \right), 
  \end{align}}
  where the separated sums are for the cases $i_1 = 1$ and $i_1 > 1$, respectively.  
  {\allowdisplaybreaks \begin{align}
  \nonumber 
   & = q^m ( 1 - q^{2m+n-1} )
   \sum_{ 1 \leq i_2 - 1 < \cdots < i_m - 1 \leq (m-1)+n } q^{ (i_2 - 1) + \cdots + (i_m - 1) } 
    \left( 1 - q^{ 2(m-1)+n+1-(i_2-1)-1 } \right) \cdots \\ 
  \nonumber 
      & \hspace{8cm} \times \left( 1 - q^{ 2(m-1)+n+1-(i_m-1)-(m-1) } \right) \\ 
  \nonumber 
   & + q^m \; \sum_{ 1 \leq i_1-1 < \cdots < i_m-1 \leq m+(n-1) } q^{ (i_1-1) + \cdots + (i_m-1) } 
    \left( 1 - q^{ 2m+(n-1)+1-(i_1-1)-1 } \right) \cdots \\ 
  \nonumber 
      & \hspace{6cm} \times \left( 1 - q^{ 2m+(n-1)+1-(i_m-1)-m } \right)  \\ 
  \label{fneqForAs}
   & = q^m ( 1 - q^{2m+n-1} ) a(m-1, n) + q^m a(m, n-1)
  \end{align}}
  The recurrence \eqref{fneqForAs} uniquely defines $a(m, n)$ 
  for integers $m$ and $n$ once we declare that 
  $a(0,0) = 1$ and $a(m, n) = 0$ for $m<0$ or $n<0$.  
  These initial conditions are also apparent in Lemma \ref{lemmaForMainThm}.  
  The proof follows once we set 
  \begin{align}
  \nonumber 
    a(m, n) = q^{ \binom{m+1}{2} } (1 - q)^m b(m, n)
  \end{align}
  for all integers $m$ and $n$.    
\end{proof}
%%%%%%%%%%%%%%%%%%%%%%%%%%%%%%%%%%%%%%%%%%%%%%%%%%%%%%%%%%%%%
%% end proof of Theorem \ref{thm2RegMain}
%%%%%%%%%%%%%%%%%%%%%%%%%%%%%%%%%%%%%%%%%%%%%%%%%%%%%%%%%%%%%

For example, $a(0, n) = 1$ for all $n \geq 0$ and 
\begin{align}
\nonumber 
  a(m, 0) = q^{\binom{m+1}{2}} (-q; q^2)_m
\end{align}
for all $m \geq 0$.  
Values of $a(m, n)$ for small $m$ and $n$, 
in particular for $m+n \leq 4$, 
are shown in the following table.  

\begin{tabular}{|p{5mm}|p{40mm}|p{35mm}|p{35mm}|p{20mm}|p{5mm}|}
 \hline
 $\bsfrac{n}{m}$ & 0 & 1 & 2 & 3 & 4 \\ 
 \hline
 0 & 1 & 1 & 1 & 1 & 1 \\
 \hline
 1 & $q $ $- q^2$ & $q$ $+ q^2$ $- 2q^3$ & $q$ $+ q^2$ $+ q^3$ $- 3 q^4$ 
  & $q$ $+ q^2$ $+ q^3$ $ + q^4$ $- 4 q^5$ & \\
 \hline
 2 & $ q^3$ $- q^4$ $- q^6$ $+ q^7 $ & $ q^3$ $+ q^4$ $- q^5$ $- q^6 $   
    $- q^7$ $-2 q^8$ $+ 3 q^9$ & 
  $ q^{3}$ $+ q^{4}$ $+ 2 q^{5}$ $- 2 q^{6} $   
    $ - q^{7}$ $- 2 q^{8}$ $- 2 q^{9}$ $- 3 q^{10}$ $+ 6 q^{11} $ & 
   & \\
 \hline
 3 & $ q^{6}$ $- q^{7}$ $- q^{9}$ $+ q^{10} $  $ - q^{11}$ 
  $+ q^{12}$ $+ q^{14}$ $- q^{15} $ & 
  $q^{6}$ $+q^{7}$ $-q^{8}$ $-2q^{10}$ $-2q^{11}$ 
  $+q^{12}$ $+2q^{15}$ $+q^{16}$ $+3q^{17}$ $-4q^{18}$ & & & \\
 \hline
 4 & $q^{10}$ $-q^{11}$ $-q^{13}$ $+q^{14}$ $-q^{15}$ 
  $+q^{16}$ $-q^{17}$ $+2q^{18}$ $-q^{19}$ $+q^{20}$ 
  $-q^{21}$ $+q^{22}$ $-q^{23}$ $-q^{25}$ $+q^{26}$ & & & & \\
 \hline
\end{tabular}

Values of $b(m, n)$ for small $m$ and $n$, 
in particular, for $m+n \leq 4$
are shown in the following table.  

\begin{tabular}{|p{5mm}|p{40mm}|p{35mm}|p{35mm}|p{20mm}|p{5mm}|}
 \hline
 $\bsfrac{n}{m}$ & 0 & 1 & 2 & 3 & 4 \\ 
 \hline
 0 & 1 & 1 & 1 & 1 & 1 \\
 \hline
 1 & 1 & $1$ $+2q$ & $1$ $+2q$ $+3q^2$ & $1$ $+2q$ $+3q^2$ $+4q^3$ & \\
 \hline
 2 & $1$ $+q$ $+q^2$ & $1$ $+3q$ $+4q^{2}$ $+4q^{3}$ $+3q^{4}$ & 
  $1$ $+3q$ $+7q^{2}$ $+9q^{3}$ $+10q^{4}$
  $+9q^{5}$ $+6q^{6}$ & & \\
 \hline
 3 & $1$ $+2q$ $+3q^{2}$ $+3q^{3}$ $+3q^{4}$
  $+2q^{5}$ $+q^{6}$ & 
  $1$ $+4q$ $+8q^{2}$ $+13q^{3}$ $+17q^{4}$
  $+18q^{5}$ $+17q^{6}$ $+14q^{7}$ $+9q^{8}$
  $+4q^{9}$ & & & \\
 \hline
 4 & $1$ $+3q$ $+6q^{2}$ $+9q^{3}$ $+12q^{4}$
  $+14q^{5}$ $+15q^{6}$ $+14q^{7}$ $+12q^{8}$
  $+9q^{9}$ $+6q^{10}$ $+3q^{11}$ $+q^{12}$
  & & & & \\
 \hline
\end{tabular}

Observe that the polynomials in the latter table are all unimodal.  
A unimodal polynomial $p(q) = c_0 + c_1 q + \cdots +c_r q^r$ 
has coefficients satisfying 
\begin{align}
\nonumber 
  c_0 \leq c_1 \leq \cdots c_{s-1} \leq c_s \geq c_{s+1} \geq \cdots \geq c_r
\end{align}
for some $0 \leq s \leq r$.  

Letting $q \to 1$ makes $a(m, n)$'s all 1's and 0's.  
However, letting $q \to 1$ makes $b(m, n)$'s coefficients of 
Bessel polynomials~\cite{GrossBess, BesselSurvey}, 
because the closed formula
\begin{align}
\nonumber 
  \lim_{q \to 1} b(m, n) = \frac{ (2m+n)! }{ m! (m+n)! 2^m}
\end{align}
satisfies the recurrence in Theorem \ref{thm2RegMain} upon $q \to 1$, 
as well as the initial values.  
These are the coefficients of the Bessel polynomials~\cite{GrossBess, BesselSurvey}
\begin{align}
\nonumber 
  y_n(x) = \sum_{k = 0}^n \frac{ (n+k)! }{n! k!} \left( \frac{x}{2} \right)^k
\end{align}
after the change of parameters $n \leftarrow m+n$ and $k \leftarrow m$.  
However, the $q$-analog of the coefficients of Bessel polynomials~\cite{BesselSurvey}
\begin{align}
\nonumber 
  y_{n, q}(x) = \sum_{k = 0}^n \frac{ [n+k]_q ! }{ [n]_q ! [k]_q!} 
    \left( \frac{x}{2} \right)^k
\end{align}
are different from $b(m, n)$ in Theorem \ref{thm2RegMain}, 
because they do not satisfy the recurrence in the theorem
under any change of parameters.  
Here, 
\begin{align}
\nonumber 
  [n]_q = 1 + q + \cdots + q^{n-1}, 
  \quad \textrm{ and } \quad 
  [n]_q ! = [n]_q [n-1]_q \cdots [1]_q.  
\end{align}
In this sense, $b(m, n)$'s are another $q$-analog of the 
coefficients of Bessel polynomials.  
We failed to find a closed formula for $b(m, n)$'s.  

It is routine to adapt the machinery 
in Lemma \ref{lemmaForMainThm} and Theorem \ref{thm2RegMain} 
for 3-regular partitions.  
One finds that 
\begin{align}
\nonumber 
% \label{id3Reg}
  \frac{ ( x^4 q^4; q^4 )_\infty }{ ( xq; q)_\infty } 
  = \sum_{l, m, n \geq 0} \frac{ q^{ \binom{l+m+n+1}{2} + \binom{l+m+1}{2} 
    + \binom{l+1}{2} } (1 - q)^{2l+m} x^{3l+2m+n} }{ ( q; q)_{3l+2m+n} } 
    b(l, m, n)
\end{align}
where 
$b(0,0,0) = 1$, $b(l,m,n)= 0 $ if $l<0$ or $m<0$ or $n<0$, and 
\begin{align}
\nonumber 
  b(l,m,n) & = \left( 1 + q + q^2 + \cdots + q^{3l+2m+n-2} \right) 
    \left( 1 + q + q^2 + \cdots + q^{3l+2m+n-3} \right) b(l-1, m, n) \\ 
\nonumber 
  & + \left( 1 + q + q^2 + \cdots + q^{3l+2m+n-2} \right) q^l b(l, m-1, n) \\ 
\nonumber 
  & + q^{2l+m} b(l, m, n-1).  
\end{align}

In fact, we can go even further, 
and do it for $k$-regular partitions for any integer $k \geq 1$.  

\begin{theorem}
\label{thmkReg}
 For any positive integer $k$, 
 {\allowdisplaybreaks \begin{align}
 \nonumber 
  & \frac{ \left( x^{k+1} q^{k+1}; q^{k+1} \right)_\infty }{ ( x q; q )_\infty } \\ 
 \nonumber 
  = & \sum_{ n_k, n_{k-1}, \ldots, n_1 \geq 0 } 
   \frac{ q^{ \binom{n_k + n_{k-1} + \cdots + n_1 + 1}{2} 
        + \binom{n_k + n_{k-1} + \cdots + n_2 + 1}{2}
        + \cdots \binom{n_k+1}{2} } 
     (1 - q)^{ (k-1)n_k + (k-2)n_{k-1} + \cdots 2 n_3 + n_2 } }
    { (q; q)_{kn_k + (k-1)n_{k-1} + \cdots + 2n_2 + n_1} } \cdots \\[5pt]
 \nonumber 
  & \qquad \qquad \times \; x^{kn_k + (k-1)n_{k-1} + \cdots + 2n_2 + n_1 } 
  b(n_k, n_{k-1}, \ldots, n_1), 
 \end{align}}
 where $b(0, \ldots, 0) = 1$, 
 $b(n_k, n_{k-1}, \ldots, n_1) = 0$ if $n_j < 0$ for any $j = $1, 2, \ldots, $k$, 
 and 
 {\allowdisplaybreaks \begin{align}
 \nonumber 
  & b(n_k, n_{k-1}, \ldots, n_1) \\ 
 \nonumber 
  & = \left( 1 + q + q^2 + \cdots + q^{ k n_k + (k-1) n_{k-1} 
    + \cdots + 2 n_2 + n_1 - 2 } \right) \cdots \\ 
 \nonumber 
  & \times \left( 1 + q + q^2 + \cdots + q^{ k n_k + (k-1) n_{k-1} 
    + \cdots + 2 n_2 + n_1 - 3 } \right) \cdots \\ 
 \nonumber 
  & \vdots \\ 
 \nonumber 
  & \times \left( 1 + q + q^2 + \cdots + q^{ k n_k + (k-1) n_{k-1} 
    + \cdots + 2 n_2 + n_1 - k } \right) 
    b( n_k - 1 , n_{k-1}, n_{k-2}, \ldots, n_1) \\[10pt]
 \nonumber 
  & + \cdots \\[10pt]
 \nonumber 
  & + \left( 1 + q + q^2 + \cdots + q^{ k n_k + (k-1) n_{k-1} 
    + \cdots + 2 n_2 + n_1 - 2 } \right) \cdots \\ 
 \nonumber 
  & \times \left( 1 + q + q^2 + \cdots + q^{ k n_k + (k-1) n_{k-1} 
    + \cdots + 2 n_2 + n_1 - 3 } \right) \cdots \\ 
 \nonumber 
  & \vdots \\ 
 \nonumber 
  & \times \left( 1 + q + q^2 + \cdots + q^{ k n_k + (k-1) n_{k-1} 
    + \cdots + 2 n_2 + n_1 - j } \right) \cdots \\ 
 \nonumber 
  & q^{ (k-j) n_k + (k-1-j) n_{k-1} + \cdots + 2 n_{j+2} + n_{j+1} }
    b( n_k, \ldots, n_{j+1}, n_j - 1, n_{j-1}, \ldots, n_1) \\[10pt] 
 \nonumber 
  & + \cdots \\[10pt] 
 \nonumber 
  & + q^{ (k-1) n_k + (k-2) n_{k-1} + \cdots + 2 n_3 + n_2 }
    b( n_k, \ldots, n_2, n_1 - 1).  
 \end{align}}
\end{theorem}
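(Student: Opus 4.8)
The plan is to reproduce, for general $k$, the two-step argument used in the case $k=2$: first establish a combinatorial multiple-sum identity in the spirit of Lemma \ref{lemmaForMainThm}, and then convert its inner sum into the recursively defined polynomials $b$ by a monomial substitution, exactly as in the proof of Theorem \ref{thm2RegMain}. No induction on $k$ is needed; the whole argument is a direct generalization of the $k=2$ machinery, with the number of multiplicity classes raised from two to $k$.

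First I would set up the generalized bijection. Writing the product as $\prod_{n\geq 1}\left(1 + xq^n + x^2 q^{2n} + \cdots + x^k q^{kn}\right)$ shows that it generates $k$-regular partitions, each size occurring $0,1,\ldots,k$ times. Fix such a partition, let $n_j$ be the number of distinct sizes occurring exactly $j$ times ($j=1,\ldots,k$), and set $N := n_1 + \cdots + n_k$ (number of distinct sizes) and $T := \sum_j j\,n_j$ (total number of parts). Running the backward procedure 1b--3b verbatim drives the partition to the base partition with distinct sizes $1,2,\ldots,N$ carrying the prescribed multiplicities, and records the moves in an auxiliary partition $\lambda$ with parts at most $T$. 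The only new feature is the set of forbidden values in $\lambda$: when a distinct size has multiplicity $j$, the count "number of parts $\geq \mu$" exceeds "number of parts $> \mu$" by $j$, so exactly $j-1$ consecutive recorded values are skipped. Each multiplicity-$j$ size therefore forbids $j-1$ values in $\lambda$ and contributes $j-1$ factors $\left(1 - q^{\cdots}\right)$, for a total of $\sum_j (j-1)n_j$ such factors. Writing the positions of the multiplicity-$j$ sizes as $i^{(j)}_1 < \cdots < i^{(j)}_{n_j}$, the base weight is $\binom{N+1}{2} + \sum_{j\geq 2}(j-1)\sum_\ell i^{(j)}_\ell$; collecting this with the forbidden-value factors gives the multiple sum, and the forward procedure 1f--3f inverts the map.

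Next I would let $a(n_k,\ldots,n_1)$ denote the inner sum over all admissible position configurations, so that the multiple sum reads $\sum \frac{q^{\binom{N+1}{2}} x^{T}}{(q;q)_{T}}\, a(n_k,\ldots,n_1)$. To extract a recurrence I would condition on the multiplicity $j$ of the smallest distinct size (size $1$ in the base). Removing it decrements $n_j$ by one and shifts every remaining position down by one; tracking how the base-weight exponent, the forbidden-value factors, and the summation range transform under this shift yields a $k$-term recurrence for $a$, the $j$-th term corresponding to "smallest size has multiplicity $j$". This is precisely the analogue of splitting the $k=2$ sum into the cases $i_1 = 1$ and $i_1 > 1$. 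Finally, the substitution
\[
  a(n_k,\ldots,n_1)
  = q^{\,\sum_{i=2}^{k} \binom{n_k + \cdots + n_i + 1}{2}}
    (1-q)^{\,\sum_{j} (j-1) n_j}\; b(n_k,\ldots,n_1)
\]
converts the $a$-recurrence into the stated $b$-recurrence and absorbs the leftover $q$-powers into the prefactor, matching $\binom{n_k+\cdots+n_1+1}{2} + \cdots + \binom{n_k+1}{2}$; one checks it reduces to $a(m,n)=q^{\binom{m+1}{2}}(1-q)^m b(m,n)$ when $k=2$.

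I expect the main obstacle to be purely bookkeeping. The delicate point is verifying that the shift-by-one of all positions, once fed through the base-weight exponent and the products of forbidden-value factors, reproduces exactly both the $q$-powers $q^{(k-j)n_k + (k-1-j)n_{k-1} + \cdots + n_{j+1}}$ and the partial products $\left(1 + q + \cdots + q^{T-2}\right)\cdots\left(1 + q + \cdots + q^{T-j}\right)$ multiplying $b(n_k,\ldots,n_j-1,\ldots,n_1)$ in each of the $k$ terms. Confirming that the single substitution above clears every $(1-q)$ and every residual binomial power \emph{uniformly} across all $k$ terms — rather than merely in the two terms that exist when $k=2$ — is the step most prone to off-by-one errors, and is where I would concentrate the verification.
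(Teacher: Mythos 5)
Your proposal is correct and takes essentially the same approach the paper intends: the paper omits the proof of Theorem \ref{thmkReg}, declaring it a straightforward extension of Lemma \ref{lemmaForMainThm} and Theorem \ref{thm2RegMain}, and your generalized bijection, the $k$-way split on the multiplicity of the smallest size, and the substitution $a(n_k,\ldots,n_1) = q^{\sum_{i=2}^{k}\binom{n_k+\cdots+n_i+1}{2}}(1-q)^{\sum_{j}(j-1)n_j}\,b(n_k,\ldots,n_1)$ constitute exactly that extension. The bookkeeping you flag does work out uniformly: decrementing $n_j$ drops the exponent $\sum_{i=2}^{k}\binom{n_k+\cdots+n_i+1}{2}$ by $\sum_{i=2}^{j}(n_k+\cdots+n_i)$, leaving the residual power $\sum_{i=j+1}^{k}(n_k+\cdots+n_i) = (k-j)n_k+(k-1-j)n_{k-1}+\cdots+n_{j+1}$ in the $j$-th term, while the $j-1$ forbidden-value factors $(1-q^{T-1}),\ldots,(1-q^{T-j+1})$ divided by $(1-q)^{j-1}$ give precisely the stated partial products.
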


The proof is a straightforward extension of the proofs of 
Lemma \ref{lemmaForMainThm} and Theorem \ref{thm2RegMain}, 
and it is omitted.  
Theorem \ref{thmkReg} is Euler's formula~\cite{theBlueBook} for $k = 1$, 
and Theorem \ref{thm2RegMain} for $k = 2$.  
$b(n_1) = 1$ for all $n_1 \geq 0$.  
Upon $q \to 1$, $b(n_2, n_1)$ are the coefficients of Bessel polynomials~\cite{GrossBess, BesselSurvey}, 
as discussed above.  
However; for $q \to 1$, values of $b(n_3, n_2, n_1)$ do not show up in 
OEIS, the online encyclopedia of integer sequences~\cite{oeis}.  
% The recurrence in Theorem \ref{thmkReg} does not reduce 
% to the recurrence given for extensions of the coefficients of Bessel polynomials 
% as given in (citey).  
% Still, it will not be surprising if $\lim_{q \to 1} b(n_k, \ldots, n_1)$ 
% is a linear combination of the extension of the coefficients 
% of the Bessel polynomials as given in (citey).  
Empirical evidence suggests that 
$b(n_k, \ldots, n_1)$, which are polynomials in $q$, are always unimodal.  
We leave it as a conjecture here.  

\begin{conj}
\label{conjKreg} 
 For any integer $k \geq 2$, 
 and any non-negative $n_k$, $n_{k-1}$, \ldots $n_1$, 
 $b(n_k, \ldots, n_1)$ constructed in Theorem \ref{thmkReg} 
 is unimodal.  
\end{conj}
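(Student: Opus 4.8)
The plan is to argue by induction on the number of parts $N = kn_k + (k-1)n_{k-1} + \cdots + 2n_2 + n_1$, the very index appearing in $(q;q)_N$ in Theorem~\ref{thmkReg}, while strengthening the assertion to a property that is actually preserved by the defining recurrence. The base cases are transparent: whenever all but one of the $n_j$ vanish, the recurrence collapses to a single branch and $b$ becomes a product of the polynomials $[M]_q = 1+q+\cdots+q^{M-1}$, each symmetric and unimodal; since a product of symmetric unimodal polynomials is symmetric and unimodal, such $b$ are in fact palindromic. In particular $b(n_1)=1$, and for $k=2$ one has $b(m,0)=[1]_q[3]_q\cdots[2m-1]_q$.

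First I would isolate two auxiliary lemmas that dispose of everything except the addition. The \emph{degree lemma}: all $k$ summands on the right of the recurrence in Theorem~\ref{thmkReg} have the same $q$-degree, so they share a common top; for $k=2$ this is the identity $\deg\big([2m+n-1]_q\,b(m-1,n)\big)=\deg\big(q^m b(m,n-1)\big)$, which one checks inductively. The \emph{product lemma}: multiplication by $[M]_q$ replaces the coefficient sequence of a polynomial by its length-$M$ sliding-window sums, and a sliding-window sum of a unimodal sequence is again unimodal (the first difference $g_i-g_{i-M}$ is nonnegative, then strictly decreasing through the transition region, then nonpositive, hence changes sign once); multiplication by a monomial $q^{e_j}$ merely shifts. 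Granting the induction hypothesis, each of the $k$ summands is then individually unimodal, sharing the common top degree while starting at staggered bottoms $q^{e_j}$, where $e_j=\sum_{i>j}(i-j)n_i$ grows as $j$ decreases.

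The crux, and the step I expect to be the main obstacle, is that unimodality is not additive: the sum of two unimodal, or even log-concave, polynomials need not be unimodal, and the summands here are genuinely not symmetric — the $k=2$ table gives $b(2,1)=1+3q+4q^2+4q^3+3q^4$, which is unimodal but not palindromic — so neither the product principle for symmetric unimodal polynomials nor a real-rootedness argument applies (indeed $[M]_q$ is not real-rooted for $M\ge 3$, and log-concavity likewise fails under addition). The right move is therefore to find a strengthening of unimodality that the $k$-term recurrence does preserve. The most promising candidate is a controlled-mode invariant: carry along, by induction, an interval in which the mode of $b$ must lie, show that after the shifts $q^{e_j}$ the modes of the individual summands are confined to a common window, and prove that on the overlap of their supports the nondecreasing part of each later-starting summand compensates the nonincreasing part of the earlier ones, forcing a single mode in the total. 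Pinning the peak locations down precisely enough to make the staggered bottoms and the common top cooperate is exactly the delicate part.

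Failing a direct analytic control of the modes, I would fall back on a combinatorial route: seek an explicit nonnegative interpretation of the coefficients of $b(n_k,\ldots,n_1)$ as counting the base-partition/auxiliary-partition pairs of Lemma~\ref{lemmaForMainThm} refined by a natural statistic, and then establish unimodality by exhibiting injections from the objects of statistic value $i$ to those of value $i+1$ below the mode, together with their reverses above it. The obstruction to this second route is precisely the one the author flags — no closed formula for $b(n_k,\ldots,n_1)$ is known — so even writing down a clean combinatorial model for its coefficients, let alone the matching injection, is likely to be the hardest single ingredient.
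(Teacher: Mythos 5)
There is no proof in the paper for your attempt to match: the statement is Conjecture \ref{conjKreg}, which the author explicitly leaves open (``We leave it as a conjecture here''), supported only by empirical evidence and the small tables. So the test is whether your argument actually closes the conjecture, and it does not. Your preparatory material is sound: the base cases are right (for $k=2$, $b(m,0)=[1]_q[3]_q\cdots[2m-1]_q$, a product of symmetric unimodal polynomials); your product lemma is correct (the coefficients of $[M]_q\,p(q)$ are length-$M$ window sums of those of $p$, and the first difference $c_i-c_{i-M}$ changes sign exactly once when $c$ is unimodal); and your degree lemma checks out at least for $k=2$, where one verifies inductively that $\deg b(m,n)=m(m+n-1)$ and that both branches of the recurrence attain this degree. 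You are also right, and commendably explicit, about why these do not suffice: unimodality is not preserved under addition, the $b$'s are not palindromic (e.g.\ $b(2,1)=1+3q+4q^2+4q^3+3q^4$), and $[M]_q$ is not real-rooted for $M\ge 3$, so none of the classical closure theorems (symmetric-unimodal products, log-concavity, interlacing roots) applies.

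But that is exactly where your proposal stops being a proof. The ``controlled-mode invariant'' is named, not constructed: you never specify the interval that is supposed to confine the mode of $b(n_k,\ldots,n_1)$, never show such an interval is stable under multiplication by $[M]_q$ (which displaces the mode by roughly $(M-1)/2$) followed by the staggered shifts $q^{e_j}$, and never prove the compensation claim that the rising segments of the late-starting summands absorb the falling segments of the early ones --- you yourself flag this as ``the delicate part.'' The fallback route has the same status: absent a closed formula or a combinatorial model for the coefficients of $b$ (the author reports failing to find one, and for $q\to 1$, $k=3$ the values do not even appear in the OEIS), the required injections below and above the mode are not exhibited. What you have is a plausible reduction of the conjecture to a sharper open question --- locate the modes of the $k$ summands precisely enough for the staggered bottoms and common top to cooperate --- together with a correct catalogue of the obstructions. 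That is a reasonable research plan, and it is consistent with why the statement is a conjecture in the paper rather than a theorem; but it is not a proof, and it should not be presented as one.
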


Please notice that this is essentially different from 
unimodality properties of the coefficients of the Bessel polynomials, 
e.g.~\cite{ChoiSmitUnim}.  

% \newpage

\bibliographystyle{amsplain}

\end{document}